\documentclass{amsart}
\usepackage{amsmath}
\usepackage{amsfonts}
\usepackage{amssymb}
\usepackage{graphicx}
\usepackage[mathscr]{eucal}
\newtheorem*{theorem}{Theorem}
\theoremstyle{plain}

\numberwithin{equation}{section}
\begin{document}
\title[Unimodality of $f$-vectors of cyclic polytopes]{Unimodality of $f$-vectors of cyclic polytopes}
\author{L\'{a}szl\'{o} Major}\address[]{L\'{a}szl\'{o} Major \newline\indent Institute of Mathematics \newline\indent Tampere University of Technology \newline\indent PL 553, 33101 Tampere, Finland}\email[]{laszlo.major@tut.fi}
\date{Aug 6, 2011}
\keywords{log-concavity, unimodality conjecture, cyclic polytope, simplicial polytope, Pascal's triangle, $f$-vector}

\begin{abstract}
Cyclic polytopes are generally known for being involved in the Upper Bound Theorem, but they have another extremal property which is less well known. Namely, the special shape of their $f$-vectors makes them applicable to certain constructions to present non-unimodal convex polytopes. Nevertheless, the $f$-vectors of cyclic polytopes themselves are unimodal.
\end{abstract}
\maketitle

The face vectors of convex polytopes were conjectured to be unimodal, that is, it was conjectured for every $d$-polytope $P$ that there exists some $j$ such that $$f_{-1}\leq f_0\leq \ldots \leq f_j\geq\ldots \geq f_{d-1},$$ where $f_k$ is the number of k-dimensional faces
of the polytope for $-1\leq k\leq d-1$. This conjecture was disproved even for simplicial polytopes (see e.g. Bj\"orner  \cite{bjo}) and even in low dimensions. For a brief historical overview of this topic, see Ziegler \cite{zieg}. However, the conjecture holds for certain polytopes with some restrictions on dimension (see e.g. Werner \cite{wer}). Unimodality may also hold for some families of convex polytopes in any dimension. For example the face vector of the $d$-simplex is clearly unimodal for any $d$.

Ziegler posed the question in \cite{zie} and also in \cite{zieg}, whether the unimodal conjecture holds generally for cyclic polytopes. A partial answer was given by Schmitt \cite{sch} who showed that the $f$-vector of a cyclic $d$-polytope is unimodal if the number of its vertices is sufficiently large compared to $d$. The unimodality is also tested for cyclic polytopes with less than 1000 vertices (\cite{sch} Section 2.3). The remaining range of the number of vertices is covered by the theorem of the present note. 

In fact, we prove a stronger statement, namely, the log-concavity of cyclic polytopes, which implies the unimodality. The sequence $f_{-1},f_0,\ldots,f_d $ is called \emph{log-concave} if $f_{i-1}f_{i+1}\leq f_i^2$ for all $-1<i<d$. 

The Upper Bound Theorem (proved by McMullen \cite{mcm}) provides an upper bound for the $f$-vectors of convex polytopes. This upper bound is attained by the cyclic polytopes. The cyclic polytope $C(v,d)$ is the convex hull of any $v$ points on the moment curve ${(t,t^2,...,t^d):t \in \mathbb{R}}$ in $\mathbb{R}^d$. The combinatorial type of $C(v,d)$ is uniquely determined by $v$ and $d$, therefore its $f$-vector depends only on $v$ and $d$. For the sake of simplicity, let us restrict our attention to even dimensional cyclic polytopes, the odd dimensional case can be dealt with in a similar way, but with the uncomfortable  presence of the floor function. 

The $h$-vector of $C(v,d)$ is defined as follows (see e.g. Gr\"unbaum \cite{grun})   
 \begin{equation}\label{eq18}h_{j}(C(v,d))=\binom{v-d-1+j}{j}, \hspace{2mm}\text{for } \hspace{2mm}0\leq j \leq  \frac d2\end{equation}
 and for $\frac d2 < j \leq d$ we have $h_j(C(v,d))=h_{d-j}(C(v,d))$ from the Dehn-Sommerville equations. The relation between the $h$-vector and $f$-vector is given by the following equations
  \begin{equation}\label{eq17}f_{j}=\sum_{i=0}^d \binom{d-i}{d-j-1}h_i, \hspace{2mm}\text{for } \hspace{2mm}-1\leq j \leq  d-1. \end{equation}
Stanley \cite{sta} formulated the above relation graphically by constructing an integer array, which contains the $f$-vector and $h$-vector of a simplicial polytope. We construct here a similar ''Pascal type triangle'' to illustrate the main idea of the present note. Let $C(v,d)$ be a cyclic polytope with the $h$-vector $h_0,\ldots,h_d$. Let $0\leq k\leq d$. We introduce the following notation
\begin{equation}\label{eqq}\binom{k}{j}_h:=\sum_{i=0}^k \binom{k-i}{k-j-1}h_i(C_d), \hspace{2mm}\text{for } \hspace{2mm}-1\leq j \leq k-1. \end{equation}
In addition, we agree that $\binom{k}{k}_h:=h_{k+1}$ for $0\leq k \leq d-1$ and $\binom{d}{d}_h:=1.$ Furthermore, let $\binom{k}{j}_h:=0$ if $j>k$ or $j<-1.$
Using this notation, we have $f_j=\binom{d}{j}_h$ for the $f$-vector of $C(v,d).$  Using Pascal's rule related to the binomial coefficients we have the recursion \begin{equation}\label{eqq1}\binom{k}{j}_h=\binom{k-1}{j-1}_h+\binom{k-1}{j}_h.\end{equation} 
This recursion allows us to construct a Pascal type triangle such that $\binom{k}{j}_h$ will be the $j$th element of the $k$th row. The $d$th row of this triangle is the $f$-vector of the polytope $C(v,d).$ 

\begin{theorem}The $f$-vectors of cyclic polytopes are log-concave. 
\end{theorem}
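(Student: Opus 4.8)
The plan is to prove the stronger fact that \emph{every} row of the Pascal type triangle built from \eqref{eqq} and \eqref{eqq1} is log-concave; since the $d$th row is the $f$-vector of $C(v,d)$, this gives the theorem. I would induct on the row index $k$, the base case (the two-term row $0$, namely $h_0,h_1$) being trivial. The engine of the induction is the following auxiliary fact about the operation recorded in \eqref{eqq1}. \textbf{Lemma.} If $(a_j)$ is a finite sequence of strictly positive reals that is log-concave, then the shifted sum $b_j:=a_{j-1}+a_j$ (with out-of-range entries set to $0$) is again log-concave. To prove it I would expand $b_j^2-b_{j-1}b_{j+1}$ and, after cancelling, bound it below by $a_{j-1}a_j-a_{j-2}a_{j+1}$ using the two adjacent log-concavity inequalities $a_{j-2}a_j\le a_{j-1}^2$ and $a_{j-1}a_{j+1}\le a_j^2$ of $(a_j)$; this residue is nonnegative because log-concavity with no internal zeros forces the ratios $a_{j+1}/a_j$ to be nonincreasing, whence $a_{j-2}a_{j+1}\le a_{j-1}a_j$. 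Read with the convention that out-of-range terms vanish, the same estimate disposes of the boundary positions.

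Each row of the triangle is obtained from the previous one exactly by the operation of the Lemma, \emph{except} that its rightmost (diagonal) entry is not the value predicted by \eqref{eqq1} but is overridden by $h_{k+1}$ (and by $1$ in the top row). Positivity of all entries, needed to apply the Lemma, is clear since every $h_i\ge1$. The inductive step then has two parts: first the Lemma shows that the pure Pascal row formed from row $k-1$ is log-concave, and then one must check that replacing its last entry by $h_{k+1}$ keeps it so. Only a single log-concavity inequality feels this replacement, namely $\binom{k}{k-2}_h\,h_{k+1}\le\bigl(\binom{k}{k-1}_h\bigr)^2$, and whether it survives depends on whether the override raises or lowers the last entry. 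This dichotomy is precisely the unimodality and Dehn-Sommerville symmetry of the $h$-vector.

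In the first half of the triangle the diagonal is increasing, $h_{k+1}\ge h_k$, the override raises the last entry, and the Lemma alone does not suffice. Here I would evaluate the neighbouring entries by the hockey-stick identity, obtaining $\binom{k}{k-1}_h=\binom{v-d+k}{k}$ and $\binom{k}{k-2}_h=\binom{v-d+k}{k-1}$, while $h_{k+1}=\binom{v-d+k}{k+1}$ by \eqref{eq18}; the inequality then collapses to the classical log-concavity of one row of ordinary Pascal's triangle. Equivalently, each such row is literally an initial segment $\binom{v-d+k}{0},\dots,\binom{v-d+k}{k+1}$ of a binomial row. In the second half the symmetry $h_{k+1}=h_{d-k-1}$ makes the diagonal nonincreasing, $h_{k+1}\le h_k$, so the override only lowers the last entry; lowering a terminal entry can never destroy log-concavity, and the Lemma carries the induction through. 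The top row is harmless, since there $h_d=h_0=1$ equals the imposed value. I expect the main obstacle to be precisely this second half: the clean binomial description of the entries is gone, so one cannot simply read off log-concavity and must lean entirely on the propagation Lemma, and getting that Lemma and its boundary cases exactly right — together with verifying that the descending diagonal values supplied by the symmetry never violate the terminal inequality — is the heart of the proof.
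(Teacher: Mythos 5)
Your proposal is correct and is essentially the paper's own argument in contrapositive form: your preservation Lemma (shifted sums of a positive log-concave sequence remain log-concave, proved by the same three-term expansion and ratio monotonicity argument) is exactly the paper's step that a dip in $P(k)$ forces a dip in $P(k-1)$; your use of $h_{k+1}\le h_k$ for $k\ge \frac d2$ to absorb the overridden diagonal entry matches the paper's inequality $c\ge d$ in its boundary case; and both proofs anchor the argument at the rows with $k<\frac d2$, which are initial segments of rows of the ordinary Pascal triangle. The only difference is packaging: you run a direct upward induction on the rows, while the paper assumes a dip in $P(d)$ and propagates it downward to the impossible dip in $P(\frac d2-1)$.
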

 \begin{proof}Using the notation \ref{eqq}, we have to show that the following sequence is log-concave
 $$\binom{d}{-1}_h,\binom{d}{0}_h,\ldots,\binom{d}{d-1}_h,\binom{d}{d}_h.$$
 Let $P(k)$ denote the vector $\Big(\binom{k}{-1}_h,\binom{k}{0}_h,\ldots,\binom{k}{k}_h\Big)$. We shall say that $\binom{k}{j}_h$ is a \emph{dip} of $P(k)$ if $\binom{k}{j}_h^2<\binom{k}{j-1}_h\cdot \binom{k}{j+1}_h$ for some $0\leq j \leq k-1$. It is sufficient to show that $P(d)$ has no dips. If $0<k< \frac d2$, then $P(k)$ is a prefix of a certain row of the Pascal's triangle, therefore $P(k)$ is log-concave (see e.g. Comtet \cite{com}), that is, it has no dips. Now, let us assume that $k\geq \frac d2 $ and $P(k)$ has at least one dip but $P(k-1)$ has no dips. If $\binom{k}{l}_h$ is a dip of $P(k)$, then one can distinguish two different cases: 
 
If $0<l<k-1$, then we  temporarily use  the following notation for the sake of simplicity:
$$a:=\binom{k-1}{l-2}_h, \hspace{2mm}b:=\binom{k-1}{l-1}_h,\hspace{2mm}c:=\binom{k-1}{l}_h,\hspace{2mm}d:=\binom{k-1}{l+1}_h.$$
Since $P(k-1)$ has no dips, $b^2\geq ac$ and $c^2\geq bd.$ On the other hand $\binom{k}{l}_h$ is a dip of $P(k)$, thus we have $(b+c)^2<(a+b)\cdot(c+d)$ by the recursion \ref{eqq1}.
Consequently, $b^2+c^2+bc<ac+ad+bd$, therefore $bc<ad$, which is contradiction, because $b^2\geq ac$ and $c^2\geq bd$ imply that $\frac ab \leq \frac bc$ and $\frac bc \leq \frac ca$, that is, $ad\leq bc$.

In the second case let $l=0$ or $l=k-1$. If $l=k-1$, then we  use  the following notation
$$a:=\binom{k-1}{k-3}_h, \hspace{2mm}b:=\binom{k-1}{k-2}_h,\hspace{2mm}c:=\binom{k-1}{k-1}_h,\hspace{2mm}d:=\binom{k}{k}_h.$$
Since $P(k-1)$ has no dips, $b^2\geq ac$.  On the other hand $\binom{k}{l}_h$ is a dip of $P(k)$, thus we have $(b+c)^2<(a+b)\cdot d$ by the recursion \ref{eqq1}. In addition, we know that $c\geq d$, therefore $b^2+bc+c^2<ac$, which is contradiction, because $b^2\geq ac$. The case $l=0$ similarly leads to a contradiction, therefore the fact that $P(k)$ has a dip implies that $P(k-1)$ has a dip too.

Now, let us assume that the vector $P(d)$ has a dip. Using the above observation recursively, one can show  that the vector $P(\frac d2-1)$ has a dip, which is not possible.
 
The  odd dimensional case can be proved by applying the same method.
 \end{proof}

\end{document}